\theoremstyle{definition}
\newtheorem{theorem}{Theorem}
\newtheorem{lemma}[theorem]{Lemma}
\newcommand\reallywidehat[1]{
\savestack{\tmpbox}{\stretchto{
  \scaleto{
    \scalerel*[\widthof{\ensuremath{#1}}]{\kern-.6pt\bigwedge\kern-.6pt}
    {\rule[-\textheight/2]{1ex}{\textheight}}
  }{\textheight}
}{0.5ex}}
\stackon[1pt]{#1}{\tmpbox}
}
\title{An Approximate Taylor Theorem for Analytic Lipschitz Functions}
\author{Stephen Deterding}
\date{Department of Mathematics and Physics\\ Marshall University, Huntington, WV, USA}
\begin{document}

\maketitle

\begin{abstract}
    Let $U$ be a bounded open subset of the complex plane and let $A_{\alpha}(U)$ denote the set of functions analytic on $U$ that also belong to the little Lipschitz class with Lipschitz exponent $\alpha$. It is shown that if $A_{\alpha}(U)$ admits a bounded point derivation at $x \in \partial U$, then there is an approximate Taylor Theorem for $A_{\alpha}(U)$ at $x$. This extends and generalizes known results concerning bounded point derivations.
\end{abstract}

\section{Introduction}

The behavior of a function at a boundary point is often very different from its behavior inside the set. For instance an analytic function on a bounded set is infinitely differentiable on the interior but may fail to have even a single derivative at a boundary point. However, if the structure of the set is nice enough then the functions may have a greater degree of smoothness at the boundary than would otherwise be expected. One such example of this greater degree of smoothness is the existence of a bounded point derivation at a boundary point. A bounded point derivation is a type of bounded linear functional that generalizes the concept of the derivative. Bounded point derivations have been studied in a wide variety of contexts and for a large number of function spaces.  \cite{Browder1967, Deterding2021,  Hallstrom1969, Hedberg1972, Lord1994, Wermer1967}

\bigskip

To illustrate how a bounded point derivation generalizes the derivative let $X$ be a compact subset of $\mathbb C$ and let $R_0(X)$ denote the set of rational functions with poles off $X$. Then $R(X)$ denotes the closure of $R_0(X)$ in the uniform norm and for a positive integer $t$, $R(X)$ is said to admit a $t$-th order bounded point derivation at $x \in X$ if there exists a constant $C > 0$ such that

\begin{align*}
    |f^{(t)}(x)| \leq C \Vert f\Vert _{\infty}
\end{align*}

\noindent for all $f \in R_0(X)$. Suppose that $R(X)$ admits a first order bounded point derivation at $x$ and that $f \in R(X)$ is not differentiable at $x$. Then there exists a sequence $\{f_j\} \in R_0(X)$ that converges uniformly to $f$ and since there is a bounded point derivation at $x$

\begin{align*}
    |f_j'(x) -f_k'(x)| \leq C \Vert f_j-f_k\Vert _{\infty}.
\end{align*}

\noindent Hence $\{f_j'(x)\}$ is a Cauchy sequence and thus converges and its limit may be said to be the derivative of $f$ at $x$. In this way a bounded point derivation generalizes a derivative by allowing derivatives to be defined for non-differentiable functions. It is worth mentioning that a result of Dolzhenko is that there is always a nowhere differentiable function in $R(X)$ whenever $X$ is a nowhere dense set \cite{Dolzhenko} so this construction has a practical use.

\bigskip

The primary question that this paper will focus on is the following: If $A$ is a family of functions and there is a bounded point derivation on $A$ at $x$, how close can the functions in $A$ come to being differentiable at $x$? One possibility is that all the functions in $A$ might be differentiable at $x$, in which case $x$ is called a removable singularity for $A$. More likely; however, is that the functions will have a more limited degree of smoothness at the boundary; for example, the functions may only be approximately differentiable at $x$. 

\bigskip

A set $E$ is said to have full area density at $x$ if

\begin{align*}
    \lim_{r \to 0} \frac{m(B_r(x) \cap E)}{m(B_r(x))} = 1
\end{align*}

\noindent where $B_r(x)$ is the ball centered at $x$ with radius $r$ and $m$ denotes $2$-dimensional Lebesgue measure. A function $f$ is said to be approximately differentiable at $x$ if there exists a set $E$ with full area density at $x$ and $L \in \mathbb C$ such that

\begin{align*}
    \lim_{y\to x, y \in E} \frac{f(y)-f(x)}{y-x} = L.
\end{align*}

\noindent $L$ is called the approximate derivative of $f$ at $x$.

\bigskip

Some work on these types of questions has been done previously. In \cite{O'Farrell2014} O'Farrell showed that a bounded point derivation on 
 the space of analytic Lipschitz functions $A_{\alpha}(U)$ at $x$ could be evaluated by a difference quotient formula 

\begin{align*}
    Df = \lim_{n \to \infty} \frac{f(z_n) - f(x)}{z_n -x}
\end{align*}

\noindent where $\{z_n\}$ is a sequence of points that converges non-tangentially to $x$ and $Df$ denotes the bounded point derivation at $f$. Subsequently this result was improved to 

\begin{align*}
    Df = \lim_{z \to x, z \in E} \frac{f(z) - f(x)}{z -x}
\end{align*}

\noindent where $E$ is a set with full area density at $x$ \cite{O'Farrell2016}. Similar results are known for $R(X)$ as well \cite[Corollary 3.6]{Wang1973}.

\bigskip

In this manuscript we will consider a different type of boundary smoothness property, that of an approximate Taylor theorem for analytic Lipschitz functions. The connection between bounded point derivations and approximate Taylor Theorems was first noted by Wang \cite{Wang1973} for the case of $R(X)$. Given a positive integer $t$ and a function $f \in R(X)$ we define the error at $z$ of the $t$-th degree Taylor polynomial of $f$ about $x$ by

\begin{align*}
     R_x^t f(z) = f(z) - \sum_{j=0}^t \dfrac{f^{(j)}(x)}{j!} (z-x)^j.
\end{align*}

\noindent It was shown in \cite[Theorem 3.4]{Wang1973} that if $R(X)$ admits a $t$-th order bounded point derivation at $x$ then for each $\epsilon >0 $ there exists a set $E$ with full area density at $x$ such that if $y \in E$ then for every $f \in R_0(X)$  
    
    \begin{align*}
   &  \vert R_x^t f(y) \vert \leq \epsilon \vert y-x \vert^t \Vert f \Vert_{\infty}.
    \end{align*}

\bigskip

Similar results have also been shown for $R^p(X)$, the closure of $R_0(X)$ in the $L^p$ norm, for $p \geq 2$ (When $p<2$, $R^p(X)$ coincides with $L^p(X)$ and thus does not admit any bounded point derivations or even bounded point evaluations \cite[Lemma 3.5]{Brennan1971}). For a given positive integer $t$, $R^p(X)$ is said to admit a $t$-th order bounded point derivation at $x$ if there exists a constant $C>0$ such that 

\begin{align*}
    |f^{(t)}(x)| \leq C \Vert f\Vert _{L^p(X)}
\end{align*}

\noindent for all $f \in R_0(X)$. It was shown in \cite[Theorem 4.1]{Wolf1978} that if $R^p(X)$ admits a $t$-th order bounded point derivation at $x$ then for each $\epsilon >0 $ there exists a set $E$ with full area density at $x$ such that if $y \in E$ then for every $f \in R_0(X)$  
    
    \begin{align*}
   &  \vert R_x^t f(y) \vert \leq \epsilon \vert y-x \vert^t \Vert f \Vert_{L^p(X)}.
    \end{align*}

    \bigskip

    The goal of this manuscript is to derive an approximate Taylor theorem for $A_{\alpha}(U)$ similar to the ones for $R(X)$ and $R^p(X)$. The remainder of the paper is outlined as follows. Section 2 provides background information on spaces of Lipschitz functions and bounded point derivations defined for those functions, while the set of full area density for the approximate Taylor theorem is constructed in Section 3. The final section contains the statement and proof of the approximate Taylor theorem for $A_{\alpha}(U)$.

\section{Lipschitz functions and bounded point derivatives}

Let $U$ be a non-empty bounded open subset of $\mathbb C$. For $0<\alpha<1$, a function $f$ is said to be a Lipschitz function with Lipschitz exponent $\alpha$ on $U$ if there exists a constant $k>0$ such that for all $z,w \in U$

\begin{align}
\label{lipschitz}
    |f(z)-f(w)| \leq k |z-w|^{\alpha}. 
\end{align}

\noindent For such a function the Lipschitz seminorm, denoted as $\Vert \cdot\Vert _{\text{Lip}_{\alpha}(U)}'$ is defined to be the smallest value of $k$ that satisfies \eqref{lipschitz}. It is only a seminorm because $k=0$ whenever $f$ is a constant function; however, it can be made into a norm by adding the uniform norm to the Lipschitz seminorm. Thus the Lipschitz norm with exponent $\alpha$, denoted as $\Vert \cdot\Vert _{\text{Lip}_{\alpha}(U)}$ is defined by

\begin{align*}
    \Vert f\Vert _{\text{Lip}_{\alpha}(U)} = \Vert f\Vert _{\text{Lip}_{\alpha}(U)}' + \sup_{U} |f(z)|.
\end{align*}

\noindent The space Lip$_\alpha(U)$ consists of all functions with finite Lipschitz norm on $U$. An important subspace of Lip$_\alpha(U)$ is the little Lipschitz class lip$_{\alpha}(U)$, which is the set of Lipschitz functions on $U$ such that 

\begin{equation*}
    \lim_{\delta \to 0^+} \sup_{0<|z-w|<\delta} \dfrac{|f(z)-f(w)|}{|z-w|^{\alpha}} = 0.
\end{equation*} 

\noindent $A_{\alpha}(U)$ denotes the space of lip$_\alpha(U)$ functions that are analytic on $U$. $A_{\alpha}(U)$ is said to admit a bounded point derivation of order $t$ at $x \in \partial U$ if there exists a constant $C$ such that

\begin{equation*}
    |f^{(t)}(x)| \leq C \Vert f\Vert _{\text{Lip}_{\alpha}(U)}
\end{equation*}

\bigskip

\noindent whenever $f \in$ lip$_\alpha(U)$ is analytic in a neighborhood of $U \cup \{x\}$.

Let $X = \partial U$. If $L \in $ lip$_\alpha(U)^*$ then it follows from de Leeuw's Theorem \cite[Theorem 2.1]{deLeeuw1961} that there exists a Borel-regular measure $\mu$ on $X \times X$ with no mass on the diagonal such that

\begin{align*}
    L(f) = \int_{X \times X} \frac{f(z)-f(w)}{|z-w|^{\alpha}} d\mu(z,w)
\end{align*}

\noindent whenever $f \in$ lip$_\alpha(U)$. Although $L$ is not represented directly by integration against a measure as in the cases of $R(X)$ and $R^p(X)$, the representation is close enough to a measure that many of the same techniques for representing measures can still be applied. In particular whenever $L$ is a bounded point derivation, this kind of integral representation can always be found.

\bigskip

The existence of a $t$-th order bounded point derivation on $A_{\alpha}(U)$ at $x$ implies the existence of all bounded point derivations of lower orders at $x$. In addition it provides an integral formula that represents evaluation at $y \in \mathbb C$ as long as a certain integral exists and is non-zero.

\begin{theorem}
\label{yrep}
    Suppose that $\mu$ represents a $t$-th order bounded point derivation on $A_{\alpha}(U)$ at $x$ and $y \in \mathbb C$ and define

    \begin{align*}
        c(y) = \int_{X\times X} \frac{(z-x)^{t+1}(z-y)^{-1} - (w-x)^{t+1}(w-y)^{-1}}{|z-w|^{\alpha}} d\mu(z,w).
    \end{align*}
    
   \noindent If $c(y)$ exists and is nonzero, then for all $f \in A_{\alpha}(U)$

    \begin{align*}
        f(y) = c(y)^{-1} \int_{X\times X} \frac{(z-x)^{t+1}(z-y)^{-1}f(z) - (w-x)^{t+1}(w-y)^{-1}f(w)}{|z-w|^{\alpha}} d\mu(z,w)
    \end{align*}
    
\end{theorem}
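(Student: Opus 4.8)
The plan is to read both displayed integrals as the value of the single linear functional determined by $\mu$ and to reduce the claim to the statement that this functional kills a function which vanishes to high order at $x$. Set $g_y(z) = (z-x)^{t+1}(z-y)^{-1}$, so that $c(y) = \int_{X\times X}\frac{g_y(z)-g_y(w)}{|z-w|^\alpha}\,d\mu(z,w)$ and the integral on the right of the asserted formula is $\int_{X\times X}\frac{g_y(z)f(z)-g_y(w)f(w)}{|z-w|^\alpha}\,d\mu(z,w)$. Because $f(y)$ is a constant, a pointwise-linear rearrangement inside the integral gives
\[ \int_{X\times X}\frac{g_y(z)f(z)-g_y(w)f(w)}{|z-w|^\alpha}\,d\mu - f(y)\,c(y) = \int_{X\times X}\frac{h(z)-h(w)}{|z-w|^\alpha}\,d\mu, \]
where $h = g_y\,(f-f(y))$; this splitting is legitimate precisely because the hypothesis that $c(y)$ exists, together with the boundedness of $f$ on $X$, makes all three integrals converge. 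Thus the theorem is equivalent to the assertion that the right-hand integral vanishes, and dividing by $c(y)\neq 0$ then gives the stated formula.

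The key step is that $h$ is far more regular than $g_y$. Writing
\[ h(z) = (z-x)^{t+1}\,\frac{f(z)-f(y)}{z-y}, \]
the difference quotient $\psi(z) := \frac{f(z)-f(y)}{z-y}$ has a removable singularity at $z=y$ because $f$ is analytic, so the pole of $g_y$ at $y$ is cancelled and $h = (z-x)^{t+1}\psi$ with $\psi\in A_\alpha(U)$. Consequently $h\in A_\alpha(U)$, the right-hand integral above is genuinely $L(h)$ for the functional $L$ that $\mu$ represents on $\mathrm{lip}_\alpha(U)$, and, since the factor $(z-x)^{t+1}$ multiplies a function finite at $x$, the function $h$ has a zero of order $t+1$ at $x$.

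I would finish by showing $L(h)=0$. First, for the dense subclass of $f$ that are analytic in a neighborhood of $U\cup\{x\}$, the function $h$ is itself analytic there with a zero of order $t+1>t$ at $x$, so $h^{(t)}(x)=0$; since $\mu$ represents a $t$-th order bounded point derivation, $L(h)$ is a fixed scalar multiple of $h^{(t)}(x)$ and hence is $0$. To pass to an arbitrary $f\in A_\alpha(U)$ it suffices to prove $L\big((z-x)^{t+1}\psi\big)=0$ for every $\psi\in A_\alpha(U)$. Multiplication by the fixed Lipschitz function $(z-x)^{t+1}$ is bounded on $\mathrm{lip}_\alpha(U)$ (estimate $|ab(z)-ab(w)|\le |a(z)||b(z)-b(w)|+|b(w)||a(z)-a(w)|$), and $L$ is a bounded functional, so both operations are continuous in the Lipschitz norm; approximating $\psi$ by functions analytic in a neighborhood of $U\cup\{x\}$ and letting the approximants tend to $\psi$ yields $L(h)=0$ in general.

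I expect the main obstacle to be this last approximation step: one must know that functions analytic in a neighborhood of $U\cup\{x\}$ are dense in $A_\alpha(U)$ in the Lipschitz norm, and one must check the routine but not entirely trivial fact that $\psi=\frac{f(z)-f(y)}{z-y}$ again belongs to $A_\alpha(U)$ (the quotient is controlled away from $y$ by the Lipschitz estimate on $f$ and is analytic across $y$). The reduction and the pole-cancellation are the conceptual heart of the argument, while the convergence bookkeeping is exactly what the hypothesis that $c(y)$ exists is there to supply.
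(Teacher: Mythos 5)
Your proof is correct and follows essentially the same route as the paper: the paper's entire argument is that $\frac{f(z)-f(y)}{z-y}$ again lies in $A_\alpha(U)$, so the functional represented by $\mu$ annihilates $(z-x)^{t+1}\frac{f(z)-f(y)}{z-y}$, and rearranging that vanishing integral yields the formula. You actually supply more detail than the paper on the one nontrivial step --- why the functional kills $(z-x)^{t+1}\psi$ for an arbitrary $\psi\in A_\alpha(U)$ rather than only for $\psi$ analytic in a neighborhood of $U\cup\{x\}$, which requires the density and multiplier-boundedness argument you sketch --- a point the paper passes over with a bare ``hence.''
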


\begin{proof}
    Since $f \in A_{\alpha}(U)$ so is $\frac{f(z)-f(y)}{z-y}$ and hence

\begin{align*}
    \int_{X\times X} \frac{(z-x)^{t+1}(z-y)^{-1}(f(z)-f(y))- (w-x)^{t+1}(w-y)^{-1}(f(w)-f(y))}{|z-w|^{\alpha}} d\mu(z,w) = 0.
\end{align*}

\noindent Therefore,

\begin{align*}
    \int_{X\times X} \frac{(z-x)^{t+1}(z-y)^{-1}f(z)- (w-x)^{t+1}(w-y)^{-1}f(w)}{|z-w|^{\alpha}} d\mu(z,w) = f(y) c(y)
\end{align*}

\noindent and hence

\begin{align*}
        f(y) = c(y)^{-1} \int_{X\times X} \frac{(z-x)^{t+1}(z-y)^{-1}f(z)- (w-x)^{t+1}(w-y)^{-1}f(w)}{|z-w|^{\alpha}} d\mu(z,w).
    \end{align*}
    
\end{proof}

\section{A set with full area density at $x$}

In this section we construct the set with full area density at $x$ that will be used in the approximate Taylor Theorem for $A_{\alpha}(U)$. To prove that the set has full area density at $x$ we need the following lemma.

\begin{lemma}
\label{fullareadensity}
    Let $\mu$ be a measure on $\mathbb C \times \mathbb C$. Let $a$, $b$, and $c$ be positive real numbers with $b+c \leq a$ and $b+c < 2$, $x \in \mathbb C$ and let $\delta>0$. Then the set 

\begin{align*}
  \left\{ y \in \mathbb C : \int \frac{|y-x|^{a}}{|w-y|^b |z-y|^c} d \mu(z,w) < \delta \right\}
\end{align*}

\noindent has full area density at $x$.
\end{lemma}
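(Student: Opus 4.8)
The plan is to show that the \emph{complement} of the set has area density zero at $x$. Write $S$ for the set in the lemma and set
$F(y) = \int |y-x|^{a}\,|w-y|^{-b}\,|z-y|^{-c}\,d\mu(z,w)$, so that $S = \{F < \delta\}$ and the goal becomes $m\big(B_r(x)\cap\{F\ge\delta\}\big)/m(B_r(x)) \to 0$ as $r\to 0$. Since the integrand is nonnegative, I would first reduce to a finite \emph{positive} measure: replacing $\mu$ by its total variation only enlarges $F$, hence shrinks $S$, so full density of $S$ in the positive case forces full density in general. The engine is then Markov's inequality on each ball, $m\big(B_r(x)\cap\{F\ge\delta\}\big)\le \delta^{-1}\int_{B_r(x)}F\,dm$, which reduces everything to estimating the averaged integral $\int_{B_r(x)}F\,dm$.

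To bound $\int_{B_r(x)}F\,dm$ I would interchange the order of integration by Tonelli and extract $|y-x|^{a}\le r^{a}$, leaving inner integrals of the form $\int_{B_r(x)}|p-y|^{-\gamma}\,dm(y)$ with $p\in\{z,w\}$. The key elementary fact is the \emph{uniform} bound $\int_{B_r(x)}|p-y|^{-\gamma}\,dm(y) \le 2\pi r^{2-\gamma}/(2-\gamma)$, valid for every pole $p$ and every $\gamma<2$: among all positions of $p$ the integral is largest when $p$ is centered at $x$, because two disks of fixed radii have largest intersection when concentric (cleanest via the layer-cake formula). Combining this with the pointwise inequality $|w-y|^{-b}|z-y|^{-c}\le |w-y|^{-(b+c)}+|z-y|^{-(b+c)}$, which holds because whichever of the two distances is smaller controls the product, and using $b+c<2$, I obtain $\int_{B_r(x)}F\,dm \le C\,\|\mu\|\,r^{a+2-(b+c)}$ with $C=4\pi/(2-(b+c))$ and $\|\mu\|$ the total mass. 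Dividing by $m(B_r(x))=\pi r^{2}$ gives a density bound of order $r^{a-(b+c)}$.

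When $b+c<a$ this exponent is positive and the density tends to $0$, which finishes the proof. The hard part will be the boundary case $b+c=a$, where the crude bound only yields a constant; indeed the statement fails outright if $\mu$ charges $(x,x)$, since a unit mass there makes $F\equiv 1$. To handle this I would fix $\rho>0$ and split $\mu$ across $N_\rho=\{(z,w):|z-x|<\rho,\ |w-x|<\rho\}$ and its complement. For pairs outside $N_\rho$, once $r<\rho/2$ at least one of $|z-y|,|w-y|$ is bounded below by $\rho/2$; rerunning the estimate above with only the remaining genuine singularity (exponent $b<2$ or $c<2$) produces a density contribution of order $r^{a}$, $r^{a-b}$, or $r^{a-c}$, each a positive power of $r$ since $a-b\ge c>0$ and $a-c\ge b>0$, so these tend to $0$. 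For the part of $\mu$ supported in $N_\rho$ the crude constant bound survives but is now multiplied by $\mu(N_\rho)$. Taking $\limsup_{r\to 0}$ therefore leaves only $C\delta^{-1}\mu(N_\rho)$, and then letting $\rho\to 0$ drives this down to $\mu(\{(x,x)\})=0$, which holds for the de~Leeuw measures representing bounded point derivations because they carry no mass on the diagonal. This iterated passage, $r\to 0$ followed by $\rho\to 0$, is the delicate point of the argument.
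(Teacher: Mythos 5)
Your argument is correct, and in the main case $b+c<a$ it is essentially a quantitative repackaging of the paper's proof: both rest on Chebyshev's inequality over the ball $B_r(x)$, an interchange of the order of integration, and the elementary bound $\int_{B_r(x)}|p-y|^{-\gamma}\,dm(y)\le 2\pi r^{2-\gamma}/(2-\gamma)$ obtained by recentering the singularity (the paper phrases this as ``the integral would be larger if $w=z$ and the integral was taken over $D(z,n^{-1})$''). The difference is in how the limit is taken. The paper defines $f_n(z,w)$ as the average of the kernel over $\Delta_n=B_{1/n}(x)$, shows $f_n$ is uniformly bounded and tends to $0$ pointwise off a null set, and then invokes the Dominated Convergence Theorem; you instead extract the explicit decay rate $r^{a-(b+c)}$, and in the boundary case $b+c=a$ you replace the soft limit by a two-parameter argument, splitting $\mu$ across a shrinking neighborhood $N_\rho$ of $(x,x)$ and taking $r\to 0$ before $\rho\to 0$. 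Your version costs more bookkeeping but yields an explicit rate when $b+c<a$ and isolates exactly where the boundary case is delicate.

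That delicacy is real, and your counterexample is correct: for $\mu=\delta_{(x,x)}$ and $b+c=a$ the integral equals $1$ for every $y\ne x$, so the set in the lemma is empty for $\delta\le 1$ and the statement fails. This exposes a genuine gap in the paper's own proof rather than in yours. Lemma \ref{fn} establishes that $f_n\to 0$ almost everywhere with respect to $m$, but the Dominated Convergence Theorem in Lemma \ref{fn2} is applied to $\int f_n\,d\mu$ and therefore requires $f_n\to 0$ $\mu$-almost everywhere. In fact the pointwise convergence holds everywhere except possibly at the single point $(x,x)$: when exactly one of $z,w$ equals $x$ one still gets $f_n\lesssim n^{-(a-b)}$ or $n^{-(a-c)}$, and $a-b\ge c>0$, $a-c\ge b>0$; and when $b+c<a$ even $f_n(x,x)\to 0$. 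So the only obstruction is an atom of $\mu$ at $(x,x)$ in the case $b+c=a$, and the lemma should carry the hypothesis $\mu(\{(x,x)\})=0$ (or else $b+c<a$). This is harmless for the paper, since the de Leeuw measures used in Theorems \ref{c(y)} and \ref{Taylor} carry no mass on the diagonal, but it needs to be stated. Your handling of the boundary case, taking $\limsup_{r\to 0}$ first and then letting $\rho\to 0$ via continuity from above of the finite measure $\mu$, is complete and correct under that hypothesis.
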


 Similar results to Lemma \ref{fullareadensity} can be found in \cite{O'Farrell2016}. Specifically Lemma 3.8 with $u = 0$ is the same as Lemma \ref{fullareadensity} with $b+c = a$ and Lemma 3.3 implies the same result; however, we will provide a more direct proof than the ones found in \cite{O'Farrell2016}. We start with the following lemma.

\begin{lemma}
\label{fn}
    Let $a$, $b$, and $c$ be positive real numbers with $b+c \leq a$ and $b+c < 2$. Let $x\in \mathbb C$, let $\Delta_n = \{y\in \mathbb C: |y-x| < \frac{1}{n}\}$ and let $m$ denote 2 dimensional Lebesgue measure. Define a function $f_n(z,w)$ by  

    \begin{align*}
        f_n(z,w) = \frac{1}{m(\Delta_n)}\int_{\Delta_n} \frac{|y-x|^{a}}{|w-y|^b |z-y|^c} dm(y).
    \end{align*}

    \noindent Then $f_n(z,w)$ is uniformly bounded for all $z$, $w$ and $n$ and converges to $0$ almost everywhere ($dm$) as $n \to \infty$.  
\end{lemma}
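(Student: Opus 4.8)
The plan is to treat the two assertions separately: the uniform bound is the substantive part, while the a.e.\ convergence follows from a short direct estimate once the relevant singularities have been pushed away from the domain of integration.

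For the uniform bound, the idea is to rescale the ball $\Delta_n$ to the unit disk so that the hypotheses $b+c\le a$ and $b+c<2$ appear explicitly. Substituting $y = x + u/n$, writing $Z = n(z-x)$ and $W = n(w-x)$, and using $m(\Delta_n)=\pi/n^2$, a direct computation gives
\begin{align*}
f_n(z,w) = \frac{n^{b+c-a}}{\pi}\int_{|u|<1}\frac{|u|^a}{|W-u|^b\,|Z-u|^c}\,dm(u).
\end{align*}
On the unit disk $|u|^a\le 1$, so it suffices to bound the remaining integral uniformly in $Z,W$. Here I would use the elementary pointwise inequality
\begin{align*}
\frac{1}{|W-u|^b\,|Z-u|^c}\le\frac{1}{|W-u|^{b+c}}+\frac{1}{|Z-u|^{b+c}},
\end{align*}
which follows by comparing $|W-u|$ and $|Z-u|$ and raising the smaller factor to the exponent $b+c$. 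Each resulting integral $\int_{|u|<1}|V-u|^{-(b+c)}\,dm(u)$ is bounded by a constant $M$ depending only on $b+c$: when $|V|\le 2$ the disk lies inside $\{|u-V|<3\}$ and one integrates $r^{1-(b+c)}$, which is finite because $b+c<2$; when $|V|>2$ the integrand is below $1$ on the disk. This yields $f_n(z,w)\le \tfrac{2M}{\pi}\,n^{b+c-a}$, and since $b+c\le a$ we have $n^{b+c-a}\le 1$ for $n\ge 1$, giving a bound independent of $z$, $w$, and $n$.

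For the a.e.\ convergence, fix $(z,w)$ with $z\ne x$ and $w\ne x$; the excluded set has $(m\times m)$-measure zero. Once $n$ is large enough that $1/n<\tfrac12\min(|z-x|,|w-x|)$, every $y\in\Delta_n$ satisfies $|z-y|\ge\tfrac12|z-x|$ and $|w-y|\ge\tfrac12|w-x|$, while $|y-x|^a<(1/n)^a$. Estimating the integrand by these bounds gives
\begin{align*}
f_n(z,w)\le\frac{2^{b+c}}{|w-x|^b\,|z-x|^c}\,\frac{1}{n^{a}},
\end{align*}
which tends to $0$ as $n\to\infty$ because $a>0$. Hence $f_n\to 0$ off a null set.

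I expect the main obstacle to be the uniform bound, and specifically obtaining uniformity when the poles $z$ or $w$ sit inside or very near $\Delta_n$, where naive estimates that separate the two singular factors break down. The scaling substitution is what makes this manageable: it converts the problem into a single translation-robust estimate on the fixed unit disk, simultaneously revealing that $b+c<2$ is exactly the integrability threshold for the combined singularity and that $b+c\le a$ is what controls the prefactor $n^{b+c-a}$.
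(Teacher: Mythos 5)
Your proposal is correct, and it proves both halves of the lemma. The a.e.\ convergence argument is essentially the paper's: restrict to $z\neq x$, $w\neq x$ (the complement being null), push the singularities a definite distance from $\Delta_n$ for large $n$, and let the factor $n^{-a}$ kill the integral. Where you genuinely diverge is in the uniform bound. The paper keeps the integral over $\Delta_n$, pulls out $n^{2-a}$, and then asserts in one line that ``this integral would be larger if $w=z$ and the integral was taken over $D(z,n^{-1})$'' --- a rearrangement-flavored maximization stated without justification --- before computing $\int_{D(z,n^{-1})}|z-y|^{-(b+c)}\,dm(y)$ explicitly to get the clean constant $\tfrac{2}{2-b-c}\,n^{b+c-a}$. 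You instead rescale to the unit disk and replace that unproved maximization with the elementary pointwise inequality
\begin{align*}
\frac{1}{|W-u|^{b}|Z-u|^{c}}\le\frac{1}{|W-u|^{b+c}}+\frac{1}{|Z-u|^{b+c}},
\end{align*}
followed by a translation-uniform estimate of $\int_{|u|<1}|V-u|^{-(b+c)}\,dm(u)$. The two routes extract the same factor $n^{b+c-a}$ and use the hypotheses in the same way ($b+c<2$ for local integrability, $b+c\le a$ for boundedness in $n$), but your version supplies a rigorous proof of the one step the paper leaves informal, at the cost of a slightly less explicit constant. Both are fine; yours is arguably the more complete argument.
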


\begin{proof}
    There are 4 possibilities: 1) $w=z=x$; 2) $w=x$, $z\neq x$; 3) $z=x$, $w\neq x$; and 4) $z\neq x$, $w \neq x$. Of these only the last one occurs on a set with positive 2 dimensional Lebesgue measure and thus it is enough to show that $f_n(z,w)$ converges to $0$ as $n \to \infty$ in this case. Since $z \neq x$ and $w \neq x$ there exists $N>0$ so that $|z-x| > n^{-1}$ and $|w-x|> n^{-1}$ for $n > N$. It thus follows from the reverse triangle inequality that for $n$ sufficiently large 

\begin{align*}
    |z-y| > | |z-x| - |y-x\Vert  > |z-x|-n^{-1}
\end{align*}

\noindent and similarly $|w-y| > |w-x| - n^{-1}$. Finally if $y \in \Delta_n$ then $|y-x| < n^{-1}$. Thus for $n$ sufficiently large

\begin{align*}
    f_n(z,w) \leq n^{-a} (|w-x|- n^{-1})^{-b} (|z-x|- n^{-1})^{-c}
\end{align*}

\noindent which tends to $0$ as $n \to \infty$.

\bigskip

To prove that $f_n(z,w)$ is uniformly bounded for all $z$, $w$ and $n$ we first rewrite the function by observing that $m(\Delta_n) = \pi n^{-2}$ and $|y-x|^a < n^{-a}$ for $y \in \Delta_n$. Thus

\begin{align*}
    f_n(z,w) = \frac{n^{2-a}}{\pi} \int_{\Delta_n} \frac{1}{|w-y|^b |z-y|^c} dm(y).
\end{align*}

\noindent Now this integral would be larger if $w=z$ and the integral was taken over $D(z,n^{-1})$, the disk centered at $z$ with radius $n^{-1}$. Thus it suffices to bound

\begin{align*}
    \frac{n^{2-a}}{\pi} \int_{D(z,n^{-1})} \frac{1}{|z-y|^{b+c}} dm(y).
\end{align*}

\noindent A computation shows that the above expression evaluates to $\frac{2n^{b+c-a}}{2-b-c}$ and thus $f_n(z,w)$ is uniformly bounded for all $z$, $w$ and $n$.
    
\end{proof}

\begin{lemma}
\label{fn2}
Let $\Delta_n = \{y\in \mathbb C: |y-x|< n^{-1}\}$ and let $\mu$ be a measure on $\mathbb C \times \mathbb C$. Also let $a$, $b$, and $c$ be positive real numbers with $b+c \leq a$ and $b+c < 2$. If $m$ denotes $2$-dimensional Lebesgue measure then 

\begin{align*}
    \frac{1}{m(\Delta_n)} \int_{\Delta_n} \int \frac{|y-x|^a}{|w-y|^b |z-y|^c} d\mu(z,w) dm(y) \to 0
\end{align*}

\noindent as $n \to \infty$.
\end{lemma}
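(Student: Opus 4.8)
The plan is to interchange the order of integration so that the quantity becomes $\int f_n(z,w)\, d\mu(z,w)$, and then to run a bounded convergence argument driven by Lemma~\ref{fn}. Since the integrand $\tfrac{|y-x|^a}{|w-y|^b|z-y|^c}$ is nonnegative and measurable, Tonelli's theorem justifies the interchange and gives
\begin{align*}
\frac{1}{m(\Delta_n)} \int_{\Delta_n} \int \frac{|y-x|^a}{|w-y|^b|z-y|^c}\, d\mu(z,w)\, dm(y)
&= \int \left( \frac{1}{m(\Delta_n)} \int_{\Delta_n} \frac{|y-x|^a}{|w-y|^b|z-y|^c}\, dm(y) \right) d\mu(z,w) \\
&= \int f_n(z,w)\, d\mu(z,w),
\end{align*}
where $f_n$ is exactly the function introduced in Lemma~\ref{fn}.

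By Lemma~\ref{fn} the functions $f_n$ are bounded by a single constant $M$ independent of $z$, $w$, and $n$. As $\mu$ is a finite measure, the constant $M$ is $\mu$-integrable and dominates every $f_n$, so the dominated convergence theorem will yield $\int f_n\, d\mu \to 0$ as soon as I know that $f_n \to 0$ $\mu$-almost everywhere.

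I expect this last point to be the crux. Lemma~\ref{fn} only furnishes convergence $dm$-almost everywhere, and the exceptional $dm$-null set includes the coordinate lines $\{z=x\}$ and $\{w=x\}$, which may carry positive $\mu$-mass; hence $dm$-a.e. convergence does not by itself suffice. To repair this I would examine $f_n$ directly on those lines. When $w=x$ and $z\neq x$ the integrand reduces to $|y-x|^{a-b}|z-y|^{-c}$, and bounding $|y-x|^{a-b} \leq n^{-(a-b)}$ on $\Delta_n$ together with $|z-y| > |z-x| - n^{-1}$ gives
\begin{align*}
f_n(z,x) \leq n^{-(a-b)}\left(|z-x| - n^{-1}\right)^{-c} \longrightarrow 0,
\end{align*}
since $a-b \geq c > 0$; the case $z=x$, $w\neq x$ is symmetric. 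Therefore $f_n \to 0$ at every point of $\mathbb C \times \mathbb C$ except the single diagonal point $(x,x)$, where in the borderline case $a=b+c$ one computes $f_n(x,x)=1$ for every $n$. Because the measures $\mu$ to which this lemma is applied carry no mass on the diagonal, $\mu(\{(x,x)\})=0$, so $f_n \to 0$ holds $\mu$-almost everywhere, and the dominated convergence theorem completes the argument.
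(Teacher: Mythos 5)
Your argument follows the same route as the paper's: interchange the order of integration (Tonelli, since the integrand is nonnegative), recognize the inner average as the function $f_n$ of Lemma~\ref{fn}, and pass to the limit by dominated convergence using the uniform bound. The substantive difference is that you noticed, and repaired, a gap that the paper's two-line proof glosses over. Lemma~\ref{fn} only gives $f_n \to 0$ off the set $\{z=x\} \cup \{w=x\}$, which is null for Lebesgue measure on $\mathbb C \times \mathbb C$ but need not be $\mu$-null: the measures of interest live on $X \times X$ with $X = \partial U$, so $dm$-a.e.\ convergence is not the right notion and the dominated convergence theorem requires convergence $\mu$-a.e. Your direct estimate $f_n(z,x) \leq n^{-(a-b)}\left(|z-x|-n^{-1}\right)^{-c} \to 0$ (using $a-b \geq c > 0$), together with its mirror image, handles the two lines, and your computation that $f_n(x,x) = 1$ for every $n$ in the borderline case $a = b+c$ shows the lemma as literally stated actually fails for a measure with an atom at $(x,x)$ (e.g.\ $\mu = \delta_{(x,x)}$), so the no-mass-on-the-diagonal hypothesis you import from the de Leeuw representation is genuinely needed and ought to appear in the statement. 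Your proof is correct (modulo the harmless standing assumption that $\mu$ is finite, which the paper also uses implicitly) and is more careful than the one in the paper.
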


\begin{proof}
    Let $f_n(z,w)$ be the function from Lemma \ref{fn}. Since it is bounded and tends to $0$ as $n\to \infty$ it follows from the Dominated Convergence Theorem that

\begin{align*}
    \lim_{n\to \infty} \int f_n(z,w) d\mu(z,w) = 0.
\end{align*}

\noindent Using Fubini's Theorem to switch the order of integration yields

\begin{align*}
    \frac{1}{m(\Delta_n)} \int_{\Delta_n} \int \frac{|y-x|^a}{|w-y|^b |z-y|^c} d\mu(z,w) dm(y) \to 0
\end{align*}

\noindent as $n \to \infty$, as desired.
\end{proof}

\bigskip

We can now prove Lemma \ref{fullareadensity}.

\begin{proof}
    Let $\Delta_n = \{y \in \mathbb C: |y-x| < n^{-1}\}$ and let $E_{\delta}$ denote the set

\begin{align*}
    \left\{ y \in \mathbb C:  \int \frac{|y-x|^a}{|w-y|^b |z-y|^c} d\mu(z,w) < \delta \right\}. 
\end{align*}

Then 

\begin{align*}
    \frac{1}{m(\Delta_n)} \int_{\Delta_n \setminus E_\delta} \int \frac{|y-x|^a}{|w-y|^b |z-y|^c} d\mu(z,w) dm(y) \geq \frac{\delta m(\Delta_n \setminus E_{\delta})}{m(\Delta_n)}.
\end{align*}

\noindent By Lemma \ref{fn2} the left hand side tends to $0$ as $n \to \infty$ and hence $E_\delta$ has full area density at $x$. Thus Lemma \ref{fullareadensity} is proved.   
\end{proof}

\bigskip

The set in the following theorem will be the set of full area density used in the proof of the approximate Taylor theorem for $A_{\alpha}(U)$. We first verify that it has full area density at $x$.

\begin{theorem}
\label{exceptional set}
 Suppose $\mu$ is a measure on $\mathbb C \times \mathbb C$, $t$ is a positive integer and $0 < \alpha < 1$. Let $\delta >0$ and let $E$ denote the union of the following $6$ sets:

    \begin{enumerate}
        \item $\{y \in \mathbb C: \int \frac{|y-x|}{|z-y|} d\mu(z,w) < \delta\}$
        \bigskip
        \item $\{y \in \mathbb C: \int \frac{|y-x|^{1+\alpha}}{|w-y|^{\alpha} |z-y| } d\mu(z,w) < \delta\}$
        \bigskip
        \item $\{y \in \mathbb C: \int \frac{|y-x|}{|w-y|^{1-\alpha}|z-y|^{\alpha}} d\mu(z,w) < \delta\}$
        \bigskip
        \item $\{y \in \mathbb C: \int \frac{|y-x|^{1+\alpha}}{|w-y| |z-y|^{\alpha} } d\mu(z,w) < \delta\}$
          \bigskip
        \item $\{y \in \mathbb C: \int \frac{|y-x|^{t+1}}{|w-y|^{\alpha} |z-y| } d\mu(z,w) < \delta\}$
        \bigskip
        \item $\{y \in \mathbb C: \int \frac{|y-x|^{t+1}}{|w-y| |z-y|^{\alpha} } d\mu(z,w) < \delta\}$
    \end{enumerate}

    \noindent Then $E$ has full area density at $x$.
\end{theorem}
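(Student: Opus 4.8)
The plan is to recognize each of the six sets comprising $E$ as an instance of the set in Lemma \ref{fullareadensity}, verify in each case that the exponents satisfy the hypotheses $b+c \le a$ and $b+c<2$, conclude that each of the six sets has full area density at $x$, and finally deduce that their union $E$ does as well.

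First I would match each set to the triple $(a,b,c)$ appearing in Lemma \ref{fullareadensity}, where $a$ is the exponent of $|y-x|$, $b$ the exponent of $|w-y|$, and $c$ the exponent of $|z-y|$. For set (2) we have $(a,b,c)=(1+\alpha,\alpha,1)$, so $b+c=1+\alpha=a$ and $b+c<2$ since $\alpha<1$; for set (3), $(1,1-\alpha,\alpha)$, so $b+c=1=a$ and $b+c<2$; for set (4), $(1+\alpha,1,\alpha)$, so $b+c=1+\alpha=a$ and $b+c<2$; for set (5), $(t+1,\alpha,1)$, so $b+c=1+\alpha\le t+1=a$ (using $\alpha<1\le t$) and $b+c<2$; for set (6), $(t+1,1,\alpha)$, so $b+c=1+\alpha\le t+1=a$ and $b+c<2$. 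In each of these five cases $b$ and $c$ are strictly positive, since $0<\alpha<1$ forces both $\alpha>0$ and $1-\alpha>0$, so Lemma \ref{fullareadensity} applies directly and each of sets (2)--(6) has full area density at $x$.

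The one case not literally covered by Lemma \ref{fullareadensity} is set (1), whose integrand has no $|w-y|$ factor, corresponding to $(a,b,c)=(1,0,1)$ with $b=0$, which violates the positivity of $b$ assumed in the lemma. I expect this degenerate exponent to be the only real obstacle, and it can be dispatched in two ways. The cheapest is to observe that $E$ is a \emph{union}, so it contains, for instance, set (2); since a superset of a set with full area density at $x$ again has full area density at $x$, the full density of set (2) already yields that of $E$. Alternatively, one checks that the proof of Lemma \ref{fullareadensity} via Lemmas \ref{fn} and \ref{fn2} goes through verbatim when $b=0$: the pointwise bound $f_n(z,w)\le n^{-a}(|z-x|-n^{-1})^{-c}$ and the uniform bound coming from $\frac{n^{2-a}}{\pi}\int_{D(z,n^{-1})}|z-y|^{-(b+c)}\,dm(y)$ both remain valid precisely because $b+c=1<2$, so set (1) also has full area density at $x$.

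Finally I would assemble the conclusion. Writing $E=\bigcup_{i=1}^{6} E_i$, the complement satisfies $E^{c}=\bigcap_{i=1}^{6} E_i^{c}\subseteq E_i^{c}$ for each $i$, so $m\bigl(B_r(x)\cap E^{c}\bigr)\le m\bigl(B_r(x)\cap E_i^{c}\bigr)$; dividing by $m(B_r(x))$ and letting $r\to 0$ forces $m\bigl(B_r(x)\cap E^{c}\bigr)/m(B_r(x))\to 0$, so $E$ has full area density at $x$. The same complement bookkeeping, now with a finite sum in place of a single inclusion, shows that the \emph{intersection} of the six sets is full-density as well, which is the form likely needed when these six estimates are combined in the approximate Taylor theorem.
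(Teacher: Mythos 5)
Your proof follows the paper's argument exactly: the paper simply notes that each of the six sets is an instance of Lemma \ref{fullareadensity} and that a union of sets of full area density again has full area density at $x$. Your additional observations---that set (1) corresponds to $b=0$ and so is not literally covered by the lemma (handled either by the superset trick or by checking the lemma's proof survives $b=0$), and that the approximate Taylor theorem actually requires the \emph{intersection} of the six sets rather than their union---are correct refinements of details the paper leaves implicit.
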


\begin{proof}
    By Lemma \ref{fullareadensity} each of the sets above have full area density at $x$. Thus their union is also a set with full area density at $x$.
\end{proof}

\section{An approximate Taylor Theorem for $A_{\alpha}(X)$}

Throughout this section we will refer to the following lemma several times.

\begin{lemma}
\label{fracineq}
    For complex numbers $z$, $y$ and $x$

\begin{align*}
    \frac{1}{z-y} = \sum_{j=1}^t \frac{(y-x)^{j-1}}{(z-x)^j}+ \frac{(y-x)^t}{(z-x)^t(z-y)}.
\end{align*}
\end{lemma}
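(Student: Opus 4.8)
The plan is to recognize the right-hand side as a finite geometric series plus a remainder term and to sum it in closed form. Setting $r = \frac{y-x}{z-x}$ (assuming $z \neq x$), the sum becomes
\[
\sum_{j=1}^t \frac{(y-x)^{j-1}}{(z-x)^j} = \frac{1}{z-x}\sum_{j=1}^t r^{j-1} = \frac{1}{z-x}\cdot\frac{1-r^t}{1-r},
\]
valid whenever $r \neq 1$, i.e. $y \neq z$. First I would simplify $1-r = \frac{z-y}{z-x}$, so that the prefactor $\frac{1}{z-x}$ cancels and the sum collapses to $\frac{1-r^t}{z-y}$. Substituting $r^t = \frac{(y-x)^t}{(z-x)^t}$ then gives exactly $\frac{1}{z-y} - \frac{(y-x)^t}{(z-x)^t(z-y)}$, and moving the remainder term to the other side yields the claimed identity.

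Alternatively, I would argue by induction on $t$. Writing $S_t$ for the displayed sum and $R_t = \frac{(y-x)^t}{(z-x)^t(z-y)}$ for the remainder, the assertion is precisely that $S_t + R_t = \frac{1}{z-y}$. The case $t=0$ holds trivially, since $S_0$ is the empty sum and $R_0 = \frac{1}{z-y}$. For the inductive step it suffices to check that passing from $t$ to $t+1$ leaves the total unchanged, i.e. that the newly added summand $\frac{(y-x)^t}{(z-x)^{t+1}}$ exactly accounts for the difference $R_t - R_{t+1}$; factoring $\frac{(y-x)^t}{(z-x)^t(z-y)}$ out of that difference and using $(z-x)-(y-x) = z-y$ confirms this in one line.

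There is no genuine obstacle here: the statement is a purely algebraic identity, and either route reduces to elementary manipulation. The only points requiring care are the standing assumptions $z \neq x$ and $z \neq y$, which guarantee that every denominator appearing above is nonzero (and, in the geometric-series route, that $r \neq 1$). These hold automatically in the intended application, where $z$ ranges over $X = \partial U$ while $x$ and $y$ are the fixed points at which the Taylor expansion is being evaluated, so the lemma may be applied freely inside the integrals of Theorem \ref{yrep}.
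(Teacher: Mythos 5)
Your proposal is correct, and your second (induction) route is essentially the paper's own proof: the paper also inducts on $t$, with base case $\frac{1}{z-y} = \frac{1}{z-x} + \frac{y-x}{(z-x)(z-y)}$, and the inductive step of applying that identity to the remainder term is exactly your observation that the new summand $\frac{(y-x)^t}{(z-x)^{t+1}}$ equals $R_t - R_{t+1}$. Your first route, summing $\sum_{j=1}^t r^{j-1}$ with $r = \frac{y-x}{z-x}$ as a finite geometric series, is a genuinely different and arguably cleaner derivation: it produces the identity in closed form in one pass rather than recursively, at the modest cost of explicitly excluding $r=1$ (i.e.\ $y=z$), a degeneracy the induction also silently requires since $z-y$ appears in every denominator. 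Your closing remark about the standing assumptions $z \neq x$, $z \neq y$ is a point the paper does not address at all; it is worth having stated, though in the application one should note that $z$ and $w$ range over $X = \partial U$ while $y$ ranges over a set of full area density at $x \in \partial U$, so $y = z$ is not automatically excluded --- it is handled there because $\mu$ has no mass on the diagonal and the exceptional sets control the singular integrands. No gaps; both routes are complete.
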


\begin{proof}
    The proof is by induction. The base case follows since 

\begin{align}
\label{base case}
    \frac{1}{z-y} = \frac{1}{z-x} + \frac{y-x}{(z-x)(z-y)}.
\end{align}
    
     Now suppose the hypothesis holds for $t$ and apply \eqref{base case} to it to obtain

     \begin{align*}
         \frac{1}{z-y} = \sum_{j=1}^{t+1} \frac{(y-x)^{j-1}}{(z-x)^j}+ \frac{(y-x)^{t+1}}{(z-x)^{t+1}(z-y)}
     \end{align*}

     \noindent which completes the induction.
\end{proof}

\bigskip

We will also need the following bound on the $c(y)$ term from the integral representation for evaluation at $y$ (Theorem \ref{yrep}) in order to prove the approximate Taylor theorem for $A_{\alpha}(U)$.

\begin{theorem}
\label{c(y)}
Let $U$ be an open subset of $\mathbb C$ and let $X = \partial U$. Let $\mu$ be a measure on $X \times X$ that represents a $t$-th order bounded point derivation at $x$ and let

    \begin{align*}
        c(y) = \int \frac{(z-x)^{t+1}(z-y)^{-1}-(w-x)^{t+1}(w-y)^{-1}}{|z-w|^{\alpha}} d\mu(z,w).
    \end{align*}

\noindent Then for each $\delta>0$ there exists a set $E$ with full area density at $x$ such that 

\begin{align*}
    t! -\delta < |c(y)| < t! + \delta.
\end{align*}

\end{theorem}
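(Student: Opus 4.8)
The plan is to expand the two factors $(z-x)^{t+1}(z-y)^{-1}$ and $(w-x)^{t+1}(w-y)^{-1}$ appearing in $c(y)$ by means of Lemma \ref{fracineq}, substitute the result into the definition of $c(y)$, and thereby split $c(y)$ into a \emph{main part} that equals $t!$ exactly and an \emph{error part} that can be controlled on a set of full area density. Multiplying the identity of Lemma \ref{fracineq} through by $(z-x)^{t+1}$ gives
\begin{align*}
    (z-x)^{t+1}(z-y)^{-1} = \sum_{j=1}^t (z-x)^{t+1-j}(y-x)^{j-1} + (z-x)(y-x)^t(z-y)^{-1},
\end{align*}
and likewise with $w$ in place of $z$. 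Subtracting the two expansions, dividing by $|z-w|^{\alpha}$, and integrating against $\mu$ lets me write
\begin{align*}
    c(y) = \sum_{j=1}^t (y-x)^{j-1}\int \frac{(z-x)^{t+1-j}-(w-x)^{t+1-j}}{|z-w|^{\alpha}}\,d\mu(z,w) + (y-x)^t\int \frac{(z-x)(z-y)^{-1}-(w-x)(w-y)^{-1}}{|z-w|^{\alpha}}\,d\mu(z,w).
\end{align*}

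The crucial observation is that each integral inside the sum is the value of the $t$-th order bounded point derivation applied to the polynomial $(z-x)^{t+1-j}$, which equals $\frac{d^t}{dz^t}(z-x)^{t+1-j}\big|_{z=x}$ (these polynomials lie in $\mathrm{lip}_{\alpha}(U)$ and are analytic near $U\cup\{x\}$, so the representation applies). Since the exponent $t+1-j$ runs through $t,t-1,\dots,1$ as $j$ runs from $1$ to $t$, this derivative vanishes except when $t+1-j=t$, i.e.\ $j=1$, where it equals $t!$. Hence the entire sum collapses to the single term $(y-x)^0\cdot t! = t!$, and
\begin{align*}
    c(y) = t! + (y-x)^t\int \frac{(z-x)(z-y)^{-1}-(w-x)(w-y)^{-1}}{|z-w|^{\alpha}}\,d\mu(z,w).
\end{align*}
It then remains to bound the error term by $\delta$ on a suitable set.

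To that end I would simplify the error integrand: writing $z-x=(z-y)+(y-x)$ gives $(z-x)(z-y)^{-1}=1+(y-x)(z-y)^{-1}$, so the difference reduces to $(y-x)\bigl[(z-y)^{-1}-(w-y)^{-1}\bigr]=(y-x)(w-z)\bigl[(z-y)(w-y)\bigr]^{-1}$, and the error term equals
\begin{align*}
    (y-x)^{t+1}\int \frac{w-z}{(z-y)(w-y)|z-w|^{\alpha}}\,d\mu(z,w).
\end{align*}
Passing to absolute values, using $|w-z|\,|z-w|^{-\alpha}=|z-w|^{1-\alpha}$, and invoking the subadditivity inequality $|z-w|^{1-\alpha}\le |z-y|^{1-\alpha}+|w-y|^{1-\alpha}$ (valid because $0<1-\alpha<1$), I bound the modulus of the error by
\begin{align*}
    \int \frac{|y-x|^{t+1}}{|z-y|^{\alpha}|w-y|}\,d|\mu|(z,w) + \int \frac{|y-x|^{t+1}}{|z-y|\,|w-y|^{\alpha}}\,d|\mu|(z,w).
\end{align*}
These two integrals are precisely those defining sets $5$ and $6$ of Theorem \ref{exceptional set} (applied to the total variation measure $|\mu|$, with $\delta$ replaced by $\delta/2$). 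By Lemma \ref{fullareadensity} each such set has full area density at $x$, and since the complement of their intersection $E$ is contained in the union of the two complements, $E$ itself has full area density at $x$. For $y\in E$ the error term has modulus less than $\delta$, so $|c(y)-t!|<\delta$, and the reverse triangle inequality yields $t!-\delta<|c(y)|<t!+\delta$.

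I expect the main obstacle to be the error estimate rather than the collapse of the sum: once Lemma \ref{fracineq} is applied the cancellation of all but one summand is automatic, but matching the remainder to the precomputed exceptional sets hinges on the algebraic simplification $(z-x)(z-y)^{-1}-(w-x)(w-y)^{-1}=(y-x)(w-z)\bigl[(z-y)(w-y)\bigr]^{-1}$ and on the fractional subadditivity of $t\mapsto t^{1-\alpha}$, which is exactly what turns the single kernel $|z-w|^{1-\alpha}\,|z-y|^{-1}|w-y|^{-1}$ into the two kernels of Theorem \ref{exceptional set}.
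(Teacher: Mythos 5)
Your proposal is correct and follows essentially the same route as the paper: expand $(z-x)^{t+1}(z-y)^{-1}$ via Lemma \ref{fracineq}, use the representing property of $\mu$ to collapse the sum to $t!$, reduce the remainder to $(y-x)^{t+1}\int (w-z)\bigl[(z-y)(w-y)\bigr]^{-1}|z-w|^{-\alpha}\,d\mu$, and control it with the subadditivity of $s\mapsto s^{1-\alpha}$ and sets $5$ and $6$ of Theorem \ref{exceptional set}. Your only deviations are cosmetic (applying the lemma at level $t$ and then simplifying $(z-x)(z-y)^{-1}=1+(y-x)(z-y)^{-1}$, and being slightly more explicit that one needs the intersection, not the union, of the two density sets).
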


\begin{proof}
    It follows from Lemma \ref{fracineq} that

\begin{align*}
    c(y) = \int |z-w|^{-\alpha} \left( \sum_{j=1}^t (y-x)^{j-1} \left[ (z-x)^{t+1-j} -(w-x)^{t+1-j} \right] + \frac{(y-x)^{t+1}}{z-y} - \frac{(y-x)^{t+1}}{w-y}  \right) d\mu 
\end{align*}

\noindent Since $\mu$ represents derivation at $x$, inside the sum every term evaluates to $0$ except when $j=1$ in which case it evaluates to $t!$. Hence

\begin{align*}
    c(y) =  t! + \int \frac{(y-x)^{t+1}(w-z)}{(z-y)(w-y)|z-w|^{\alpha}} d\mu.
\end{align*}

\noindent Thus to prove the theorem it suffices to show that there exists a set $E$ with full area density at $x$ such that

\begin{align*}
    \int \frac{|y-x|^{t+1} |w-z|^{1-\alpha}}{|z-y\Vert w-y|} d\mu < \delta.
\end{align*}

\noindent It follows from the triangle inequality that

\begin{align*}
    \int \frac{|y-x|^{t+1} |w-z|^{1-\alpha}}{|z-y\Vert w-y|} d\mu \leq \int \frac{|y-x|^{t+1} }{|z-y|^{\alpha}|w-y|} d\mu + \int \frac{|y-x|^{t+1} }{|z-y\Vert w-y|^{\alpha}} d\mu
\end{align*}

\noindent and hence according to Theorem \ref{exceptional set} there exists a set $E$ with full area density at $x$ such that each integral on the right side of the inequality is less than $\frac{\delta}{2}$ for $y \in E$, which completes the proof.
    
\end{proof}

\bigskip

We can now prove the following approximate Taylor theorem for $A_{\alpha}(U)$.

\begin{theorem}
\label{Taylor}
    Let $U \subseteq \mathbb C$ be a bounded open set and suppose that $x \in \partial U$ admits a bounded point derivation on $A_{\alpha}(U)$ of order $t$. Then for every $\epsilon>0$ there exists a set $E$ with full area density at $x$ such that if $y \in E$ then for every $f \in A_{\alpha}(U)$ 

\begin{align*}
   |R^t_x f(y)| \leq \epsilon |y-x|^t \Vert f\Vert _{\text{Lip}_{\alpha}(U)}.  
\end{align*}
    
\end{theorem}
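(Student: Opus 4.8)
The plan is to apply the representation of Theorem~\ref{yrep} to the remainder $g=R_x^t f$ rather than to $f$ itself, exploiting that the point derivations of $g$ vanish through order $t$. I would first record two facts about $g$. Because the Taylor polynomial is a polynomial, $g\in A_\alpha(U)$, and by linearity of the point derivations $g^{(k)}(x)=0$ for $0\le k\le t$. Second, and this is the \emph{crux}, I would prove the pointwise estimate $|g(w)|\le C\|f\|_{\text{Lip}_\alpha(U)}|w-x|^\alpha$ for every $w\in X$: writing $g(w)=\bigl(f(w)-f(x)\bigr)-\sum_{k=1}^t \frac{f^{(k)}(x)}{k!}(w-x)^k$, the difference is controlled by the Lipschitz seminorm, each $f^{(k)}(x)$ by the norm of the order-$k$ point derivation, and each $|w-x|^k$ is absorbed into $|w-x|^\alpha$ using that $X$ is bounded. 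Here $C$ depends only on $t$, $\mathrm{diam}\,U$, and the derivation constants.

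Next I would substitute $g$ into Theorem~\ref{yrep} and expand $(z-x)^{t+1}(z-y)^{-1}$ by Lemma~\ref{fracineq}. Each resulting polynomial integral has the form $\int |z-w|^{-\alpha}\bigl[(z-x)^{p}g(z)-(w-x)^{p}g(w)\bigr]\,d\mu$ with $1\le p\le t$, i.e.\ the order-$t$ point derivation of $(u-x)^{p}g$; this vanishes, since $g$ has a point-derivation zero of order $t$ (for $f$ analytic near $x$ this is the classical fact that $(u-x)^p R_x^t f$ has a zero of order exceeding $t$ at $x$, and the general case follows by density). Rewriting the single surviving integral with $\frac{z-x}{z-y}=1+\frac{y-x}{z-y}$ and using $g^{(t)}(x)=0$ once more leaves $c(y)R_x^t f(y)=(y-x)^{t+1}J(y)$, where $J(y)=\int |z-w|^{-\alpha}\bigl[\frac{g(z)}{z-y}-\frac{g(w)}{w-y}\bigr]\,d\mu$. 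Passing through $g$ is essential: the alternative expansion of $c(y)R_x^t f(y)$ through the Taylor polynomial $T$ of $f$ produces a term $(t!-c(y))T(y)$ whose lowest-order piece (the constant $f(x)$ term) is only of order $\delta\|f\|$ rather than the required order $|y-x|^t\|f\|$, whereas routing through $g$ absorbs this term into $(y-x)^{t+1}J(y)$, which carries the full factor $|y-x|^{t+1}$.

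It then remains to show $|J(y)|\le C\|f\|/|y-x|$ on a set of full area density. Using $\frac{g(z)}{z-y}-\frac{g(w)}{w-y}=\frac{g(z)-g(w)}{z-y}+\frac{g(w)(w-z)}{(z-y)(w-y)}$ I would split $J=A+B$. For $A$ the bound $\frac{|g(z)-g(w)|}{|z-w|^\alpha}\le \|g\|'_{\text{Lip}_\alpha(U)}\le C\|f\|_{\text{Lip}_\alpha(U)}$ reduces matters to $\int|z-y|^{-1}\,d\mu$, handled by the first set of Theorem~\ref{exceptional set}. For $B$ I would insert $|g(w)|\le C\|f\||w-x|^\alpha$, write $|z-w|^{1-\alpha}$ for the $(w-z)|z-w|^{-\alpha}$ factor, and apply the subadditivity inequalities $|w-x|^\alpha\le |w-y|^\alpha+|y-x|^\alpha$ and $|z-w|^{1-\alpha}\le |z-y|^{1-\alpha}+|w-y|^{1-\alpha}$; multiplying out yields exactly four integrals which, after multiplication by $|y-x|$, are precisely sets (1)--(4) of Theorem~\ref{exceptional set}. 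Hence $|y-x|\,|J(y)|\le C\delta\|f\|$ on the intersection $E$ of the six sets, which has full area density at $x$ since each set does by Lemma~\ref{fullareadensity} and finite intersections preserve full density.

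Finally I would combine this with the lower bound $|c(y)|>t!-\delta$ from Theorem~\ref{c(y)}, which holds on $E$ because it comes from sets (5)--(6), to obtain $|R_x^t f(y)|\le \frac{|y-x|^{t+1}}{t!-\delta}|J(y)|\le \frac{C\delta}{t!-\delta}|y-x|^t\|f\|_{\text{Lip}_\alpha(U)}$; choosing $\delta$ small in terms of $\epsilon$ and the fixed constant $C$ finishes the proof. The main obstacle is the estimate on $B$: bounding $|g(w)|$ crudely by $\|f\|$ gives only an error of order $\delta\|f\|$, so the $|w-x|^\alpha$ gain in the pointwise bound on the remainder—together with the exact matching of the four resulting integrals to the specially chosen density sets—is what makes the estimate close.
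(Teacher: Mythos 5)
Your proposal is correct and follows essentially the same route as the paper: apply the representation of Theorem~\ref{yrep} to $g=R_x^tf$, kill the polynomial terms from Lemma~\ref{fracineq} using the vanishing point derivations, split the surviving integral into the same two pieces, bound them via the pointwise estimate $|g(w)|\le C\Vert f\Vert\,|w-x|^{\alpha}$ and the triangle-inequality expansion into the four density sets, and finish with the lower bound on $|c(y)|$. The only (welcome) difference is that you explicitly take the \emph{intersection} of the six density sets, which is what the argument actually requires, whereas the paper's text says ``union.''
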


\begin{proof}
Suppose that $f \in A_{\alpha}(U)$ and let $g(z) = f(z)-f(x)-f'(x)(z-x)- \ldots - \frac{f^{(t)}(x)}{t!}(z-x)^t$. Then $g(x) = g'(x) = \ldots = g^{(t)}(x) = 0$, $g(y) = R^t_x f(y)$, and because there is a bounded point derivation at $x$, $\Vert g\Vert _{\text{Lip}_{\alpha}(U)} = C \Vert f\Vert _{\text{Lip}_{\alpha}(U)}$. Hence it suffices to show that for every $\epsilon>0$ there exists a set $E$ with full area density at $x$ such that $|g(y)| \leq \epsilon |y-x|^t \Vert g\Vert _{\text{Lip}_{\alpha}(U)}$.Let $X = \partial U$  and let $\mu$ be a measure on $X \times X$ that represents the $t$-th order bounded point derivation at $x$. Let $E$ denote the union of the following sets.

\begin{enumerate}
        \item $\{y \in \mathbb C: \int \frac{|y-x|}{|z-y|} d\mu(z,w) < \frac{\epsilon}{10}\}$
        \bigskip
        \item $\{y \in \mathbb C: \int \frac{|y-x|^{1+\alpha}}{|w-y|^{\alpha} |z-y| } d\mu(z,w) < \frac{\epsilon}{10}\}$
        \bigskip
        \item $\{y \in \mathbb C: \int \frac{|y-x|}{|w-y|^{1-\alpha}|z-y|^{\alpha}} d\mu(z,w) < \frac{\epsilon}{10}\}$
        \bigskip
        \item $\{y \in \mathbb C: \int \frac{|y-x|^{1+\alpha}}{|w-y| |z-y|^{\alpha} } d\mu(z,w) < \frac{\epsilon}{10}\}$
          \bigskip
        \item $\{y \in \mathbb C: \int \frac{|y-x|^{t+1}}{|w-y|^{\alpha} |z-y| } d\mu(z,w) < \frac{1}{4}\}$
        \bigskip
        \item $\{y \in \mathbb C: \int \frac{|y-x|^{t+1}}{|w-y| |z-y|^{\alpha} } d\mu(z,w) < \frac{1}{4}\}$
    \end{enumerate}

\noindent By Theorem \ref{exceptional set} $E$ has full area density at $x$. It follows from Theorem \ref{c(y)} that $c(y)$ exists and is non-zero and hence it follows from Theorem \ref{yrep} that 

\begin{align*}
    g(y) = c(y)^{-1} \int \frac{(z-x)^{t+1}(z-y)^{-1}g(z) - (w-x)^{t+1}(w-y)^{-1}g(w)}{|z-w|^{\alpha}} d\mu 
\end{align*}

\noindent and by Lemma \ref{fracineq} this simplifies to

\begin{align*}
    g(y) = c(y)^{-1} &\left( \sum_{j=1}^{t+1} (y-x)^{j-1} \int \frac{(z-x)^{t+1-j}g(z)  - (w-x)^{t+1-j}g(w)}{|z-w|^{\alpha}} d\mu \right.\\
    &\left.+ (y-x)^{t+1} \int \frac{(z-y)^{-1}g(z) - (w-y)^{-1}g(w)}{|z-w|^{\alpha}} d\mu   \right).
\end{align*}

\noindent Because $\mu$ represents a bounded point derivation of order $t$ at $x$ and $g(x) = g'(x) = \ldots = g^{(t)}(x) = 0$ it follows that all the terms in the sum vanish and hence

\begin{align*}
    g(y) = c(y)^{-1}(y-x)^{t+1}  \int \frac{(z-y)^{-1}g(z) - (w-y)^{-1}g(w)}{|z-w|^{\alpha}} d\mu.
\end{align*}

\noindent After adding and subtracting $(z-y)^{-1}g(w)$ to both sides of the numerator we obtain

\begin{align}
\label{twointegrals}
    g(y) = c(y)^{-1} \left( \int \frac{g(z)-g(w)}{|z-w|^{\alpha}} \frac{(y-x)^{t+1}}{z-y} d\mu + \int \frac{g(w)(y-x)^{t+1}(w-z)}{(z-y)(w-y)|z-w|^{\alpha}} d\mu\right).
\end{align}

\noindent Since $y \in E$ the first integral in \eqref{twointegrals} is bounded by $\frac{\epsilon}{10} |y-x|^t \Vert g\Vert _{\text{Lip}_{\alpha}(U)}'$.

\bigskip

Now we bound the second integral. Because $g(x) = 0$ it follows that $|g(w)(w-x)^{-\alpha}| \leq \Vert g\Vert _{\text{Lip}_{\alpha}(U)}'$ and thus the second integral is bounded by

\begin{align*}
    |y-x|^t \Vert g\Vert _{\text{Lip}_{\alpha}(U)}' \int \frac{|y-x|\cdot|w-x|^{\alpha}|w-z|^{1-\alpha}}{|w-y\Vert z-y|}d\mu.
\end{align*}

\noindent By the triangle inequality applied to both $|w-x|^{\alpha}$ and $|w-z|^{1-\alpha}$ this is less than 

\begin{align*}
    |y-x|^t \Vert g\Vert _{\text{Lip}_{\alpha}(U)}' \left( \int \frac{|y-x|}{|z-y|} d\mu + \int \frac{|y-x|^{1+\alpha}}{|w-y|^{\alpha} |z-y| } d\mu + \int \frac{|y-x|}{|w-y|^{1-\alpha}|z-y|^{\alpha}} d\mu + \int \frac{|y-x|^{1+\alpha}}{|w-y| |z-y|^{\alpha} } d\mu \right)
\end{align*}

and since $y \in E$ the second integral in \eqref{twointegrals} is bounded by $\frac{2\epsilon}{5} |y-x|^t \Vert g\Vert _{\text{Lip}_{\alpha}(U)}'$. Also by Theorem \ref{c(y)}

\begin{align*}
    t!-\frac{1}{2} < |c(y)| < t! +\frac{1}{2}
\end{align*}

\noindent and hence

\begin{align*}
    |g(y)| \leq  \frac{\epsilon/2}{t!-1/2} |y-x|^t \Vert g\Vert _{\text{Lip}_{\alpha}(U)}' \leq \epsilon |y-x|^t \Vert g\Vert _{\text{Lip}_{\alpha}(U)}
\end{align*}

\noindent as desired. 
\end{proof}

\bigskip

\bibliographystyle{abbrv}
\bibliography{bib}

\end{document}